\documentclass[11pt]{article}
\usepackage[margin=0.9in]{geometry}
\usepackage[T1]{fontenc}
\usepackage{lmodern}
\usepackage{amsmath,amssymb,amsthm,mathtools,mathrsfs}
\usepackage{microtype}
\usepackage{booktabs,tabularx,array}
\usepackage[numbers,sort&compress]{natbib}
\usepackage{xcolor}
\usepackage{enumitem}
\usepackage{xurl}
\usepackage{hyperref}
\hypersetup{hypertexnames=false,colorlinks=true,linkcolor=blue!45!black,citecolor=blue!45!black,
  urlcolor=blue!45!black,pdftitle={Gradient-Only Online Convex Optimization with a Single Quadratic Leader},
  pdfauthor={Kamiar Asgari}}
\newtheorem{theorem}{Theorem}
\newtheorem{corollary}[theorem]{Corollary}

\theoremstyle{remark}

\setlist[enumerate]{leftmargin=*,itemsep=0.45em}
\allowdisplaybreaks[1]

\title{Gradient-Only Online Convex Optimization\\with a Single Quadratic Leader}
\author{Kamiar Asgari\\\small\href{mailto:kamiaras@usc.edu}{\texttt{kamiaras@usc.edu}}}
\date{September 17, 2026}

\begin{document}
\maketitle
\begin{abstract}
We study online convex optimization on a bounded Euclidean domain when the
learner receives only one subgradient at its prediction and knows neither the
horizon, a gradient bound, nor the losses' strong-convexity parameters.
We present a scale-invariant algorithm that couples projected adaptive gradient
descent to one cone-constrained quadratic leader through a positive wealth
process. The algorithm maintains $O(d)$ state, without a learning-rate grid or
restarts. For a domain of diameter at most $D$, it guarantees regret at most
$\sqrt{2}\,D(\sum_t\|\widehat g_t\|^2)^{1/2}
+2D\max_t\|\widehat g_t\|$, with no logarithmic factor.
A simultaneous quadratic-distance bound yields explicit improvements for
unknown strong convexity, including a refinement governed by jumps in the
observed gradient scale. For varying per-round curvature, the bound depends on
the total shortfall below a reference level selected in hindsight.
The proof uses a clipping-compatible surrogate and a potential involving the
quadratic minimum, log wealth, and two scalar curvature statistics.
The resulting strongly convex bound is not uniform at the classical logarithmic
rate over all curvature-to-gradient ratios; the exact parameter dependence is
stated. The contribution is a single-leader construction and its analysis,
within an established literature on universal and scale-free online learning.
\end{abstract}

\noindent\textbf{Declaration of generative AI use.}
This manuscript, including its mathematical content and exposition, was
generated in its entirety using OpenAI's ChatGPT (GPT-6 Astra) at the request
of Kamiar Asgari.

\section{Introduction}\label{sec:intro}
Online convex optimization asks a learner to make sequential decisions whose
cumulative loss competes with every fixed decision in a convex set. General
convex losses and strongly convex losses admit different regret scales
\citep{Zinkevich2003OGD,Hazan2007Logarithmic}. A
learner that adapts to curvature should retain the general convex guarantee
while exploiting additional curvature whenever it is present. The problem
becomes more delicate when even the magnitude of future gradients is unknown
and feedback is restricted to one subgradient at the point played.

This paper studies that feedback model with a known finite diameter bound.
Our objective is an explicit construction with one evolving quadratic
optimization problem and linear memory. A projected adaptive-gradient
reference sequence supplies the general convex guarantee, following the
adaptive-gradient tradition \citep{Duchi2011AdaGrad,scOrabonaPal2018Scale}.
A quadratic leader
over a cone supplies a comparator-dependent quadratic-distance guarantee.
A positive scalar wealth process connects the two, and affine rescaling
incorporates new gradient maxima without restarting the algorithm.
The construction requires no loss values, curvature observations, gradients
at unplayed points, horizon, or gradient-scale parameter.

The main results are as follows.
\begin{enumerate}
\item \textbf{A logarithm-free convex bound.}
For every comparator, the algorithm achieves regret at most
\[
\sqrt{2}\,D\sqrt{\sum_t\|\widehat g_t\|^2}
+2D\max_t\|\widehat g_t\|.
\]
This statement is deterministic,
simultaneous over comparators, and valid for arbitrary finite observed
subgradients.
\item \textbf{An explicit route from quadratic distance to curvature.}
Theorem~\ref{thm:main} gives a family of bounds indexed by a positive analysis
parameter. Strong convexity cancels the distance term. A sharper version
charges squared increases of the running gradient maximum against the
remaining curvature. The analysis parameter is never supplied to the learner.
\item \textbf{Unknown, varying strong convexity.}
Theorem~\ref{thm:varying} bounds regret by a uniform-curvature expression plus
the sum of per-round curvature shortfalls below a freely chosen reference
level. This applies to nonmonotone curvature sequences with zero-curvature
rounds, while preserving the convex guarantee.
\item \textbf{One quadratic leader and linear state.}
The quadratic is represented by two vectors and three scalars. The full
algorithm uses $O(d)$ memory, one exact cone-constrained quadratic
minimization, and one Euclidean projection per round. The minimization may
cost more than a projection; linear memory is not a claim of linear running
time on an arbitrary domain.
\end{enumerate}

The proof hinges on two cancellations. First, the wealth identity compares
the actual predictions to the adaptive-gradient reference without a
logarithmic penalty. Second, when linear regret is positive, that same
identity caps the wealth that must be paid for in the quadratic potential.
A surrogate coefficient chosen after observing the gradient makes a single
stability argument valid even when the betting vector is clipped.
The relevant Hessian has an isotropic block and a scalar Schur complement,
allowing the potential to depend on two scalar statistics instead of a
dimension-dependent log determinant.

The scope of the result is deliberately specific. Universal regret rates,
single-gradient feedback, scale adaptation, and curvature-shortfall measures
all have substantial precedents, discussed in Section~\ref{sec:related}.
We do not claim a uniform $O(\widehat G_T^2\log(eT)/\mu)$ guarantee for every
ratio $\mu D/\widehat G_T$, nor do we give an exp-concave guarantee.
The point is the particular single-quadratic construction, its explicit
constants and scale-jump refinement, and the guarantees it supports.

Section~\ref{sec:problem} specifies the model; Section~\ref{sec:algorithm}
gives the algorithm. Section~\ref{sec:guarantees} states the bounds and
Section~\ref{sec:proof} proves them. Section~\ref{sec:varying} treats varying
curvature, and Section~\ref{sec:discussion} discusses the limits of the
construction.

\section{Related work}\label{sec:related}
Table~\ref{tab:comparison} summarizes the closest comparisons; the following
subsections discuss their guarantees, assumptions, and feedback models.
\begin{table}[!htbp]
\centering
\small
\setlength{\tabcolsep}{5pt}
\renewcommand{\arraystretch}{1.22}
\caption{Selected close comparisons. Orders in $T$ suppress fixed diameter,
gradient, and curvature parameters; they do not identify the underlying
assumptions. SC denotes strong convexity, EC exp-concavity, and
$k_\mu=\sum_t(1-\mu_t/\mu)_+$. The final row uses observed gradients
and retains the qualifications of Theorems~\ref{thm:main} and
\ref{thm:varying}.}
\label{tab:comparison}
\vspace{0.4em}
\begin{tabularx}{\textwidth}{@{}>{\raggedright\arraybackslash}p{0.18\textwidth}
  >{\raggedright\arraybackslash}p{0.28\textwidth}
  >{\raggedright\arraybackslash}p{0.20\textwidth}
  >{\raggedright\arraybackslash}X@{}}
\toprule
Method & Guarantee or scope & Feedback and scale & Representation or oracle \\
\midrule
MetaGrad \citep{vanErven2021MetaGrad}
& $O(\sqrt{T\log\log T})$ convex; dimension-dependent SC; EC adaptation
& One gradient; unknown scale
& Learning-rate grid; full-matrix, sketch, or coordinate variants \\
\addlinespace
Maler \citep{Wang2020Maler}
& $O(\sqrt T)$ convex; $O(\log T)$ SC; $O(d\log T)$ EC
& One gradient; supplied global gradient bound
& $O(\log T)$ experts \\
\addlinespace
One-projection universal OCO \citep{Yang2024OneProjection}
& Optimal convex, SC, and EC orders
& One gradient; supplied global gradient bound
& Expert family; one projection onto the original domain \\
\addlinespace
Contamination analysis of Maler \citep{scKamijimaIto2024}
& $O(\log T+\sqrt{k_\mu\log T})$ for varying SC
& Same feedback and scale inputs as Maler
& Reanalysis of the existing ensemble \\
\addlinespace
This work
& $O(D\sqrt{\widehat S_T})$ convex; explicit SC and curvature-shortfall bounds
& One subgradient; unknown scale and curvature
& $O(d)$ state; one cone quadratic solve and one projection \\
\bottomrule
\end{tabularx}
\end{table}

\subsection{Universality through multiple learning rates}

Adaptation to unknown curvature is an established objective in online convex
optimization.  MetaGrad aggregates a geometric family of learning rates and
controls a comparator-dependent quadratic variation
$\sum_t\langle g_t,x_t-u\rangle^2$
\citep{vanErven2016MetaGrad,vanErven2021MetaGrad}.  Its full-matrix analysis
gives fast rates for exp-concave losses and favorable stochastic problems;
the corresponding strongly convex guarantee carries a dimension factor.
The general convex guarantee has an additional $\sqrt{\log\log T}$ factor.
The expanded treatment also adapts to unknown gradient scales, maintains
only logarithmically many active learning rates, and offers sketching and
coordinatewise variants.  Thus neither played-point gradient feedback nor
adaptation to gradient magnitude is specific to the present method.

The closest classical comparison for the simultaneous convex and strongly
convex objectives is Maler \citep{Wang2020Maler}.  It combines a convex expert
with families of strongly convex and exp-concave experts, obtaining
$O(\sqrt T)$, $O(\log T)$, and $O(d\log T)$ regret, respectively, when the
other problem parameters are fixed.  Its strongly convex surrogates use
$\eta^2G^2\|x-x_t\|^2$, which is closely related to the squared-distance
term controlled here.  All experts can be updated from the same gradient
at the played point; multiple experts do not imply multiple gradient
queries.  The stated construction uses a gradient bound and a horizon-based
learning-rate grid.  Consequently, simultaneous rate adaptation and removal
of a logarithmic factor from convex regret are already known achievements.

Our structural distinction is to optimize one quadratic over the
homogenized domain, with a continuous comparison parameter $a$ used only
in the analysis.  No discrete learning-rate family is maintained.  This
gives $O(d)$ stored state and a common potential for the convex safeguard
and the squared-distance guarantee.  These properties should be distinguished
from a claim to dominate the complete regret guarantees of universal
ensembles: the present analysis does not supply an exp-concave guarantee,
and its strongly convex dependence is stated explicitly rather than hidden
inside a claim of uniform minimax optimality.

\subsection{Second-order aggregation and stronger feedback}

Second-order expert bounds based on excess losses, rather than raw losses,
provide an important route to curvature adaptation
\citep{Gaillard2014Excess}.  In universal OCO, a squared excess-loss term
can be absorbed by the negative term furnished by strong convexity or
exp-concavity.  This cancellation principle also explains why a generic
$O(\sqrt T)$ aggregation overhead would be inadequate for preserving
logarithmic regret.

The USC framework of \citet{Zhang2022USC} uses a second-order
meta-algorithm to combine algorithms acting on the original losses.
It can inherit specialized guarantees without constructing a new surrogate
for every base algorithm.  This flexibility changes the feedback comparison:
its original-loss experts may require gradients at their own decisions,
and the paper explicitly allows multiple queries per round.  The journal
extension \citep{Zhang2025SecondOrder} additionally treats composite losses
and optimistic aggregation.  We therefore distinguish adaptation of a
collection of original-loss solvers from the present restriction to one
arbitrary subgradient at the played decision.

\subsection{Smoothness, gradient variation, and projection efficiency}

Universal algorithms can exploit information beyond curvature.  UFO
\citep{Wang2020UFO} obtains small-loss guarantees for convex, exp-concave,
and strongly convex smooth losses.  Such results use additional smoothness
and loss assumptions; the squared-gradient quantity in our nonsmooth
guarantee should not be identified with a comparator's cumulative loss
without a separate self-bounding argument.

\citet{Yan2023GradientVariation} develop universal gradient-variation
guarantees with a multilayer ensemble and a one-gradient surrogate
implementation.  \citet{Yan2024OptimalVariation} subsequently obtain the
optimal orders $O(\sqrt{V_T})$, $O(d\log V_T)$, and $O(\log V_T)$ for
the three curvature classes, with additive constants understood, using
$O(\log T)$ base learners and one gradient query.  Here
$V_T=\sum_{t=2}^T\sup_x\|\nabla f_t(x)-\nabla f_{t-1}(x)\|^2$
measures changes in loss functions, rather than gradient energy along the
learner's path.  Their assumptions include bounded gradients and smoothness;
the latter is imposed on the ambient space or a suitable enlargement of the
domain.  Our results make no smoothness assumption and do not claim
gradient-variation adaptation.

The later UniGrad treatment \citep{Zhao2025UniGrad} develops correction-based
and Bregman-divergence-based variants, their one-gradient implementations,
and an anytime ensemble construction.  It further confirms that one-gradient
feedback and an unknown horizon can coexist with universal ensemble methods.
The relevant comparison here is the representation of adaptation by a single
quadratic and the associated assumptions, rather than the availability of
one gradient query in isolation.

Gradient-scale adaptation under generalized smoothness is studied by
\citet{Xie2024GeneralizedSmoothness}.  Their Lipschitz-adaptive
meta-algorithm avoids restarts, but their universal construction uses
multiple base learners, function information beyond the played-point
gradient, and queryable generalized-smoothness information.  These
requirements separate that construction from our feedback model.
Likewise, absence of restarts alone is not a novelty claim.

\citet{Yang2024OneProjection} obtain simultaneous optimal regret orders
with one projection onto the original feasible set per round by operating
experts on a simpler containing domain.  The present method also performs
one projection, but additionally solves a cone-constrained quadratic
problem.  An arbitrary-domain implementation is therefore not comparable
on projection count alone: our $O(d)$ memory bound does not imply an
$O(d)$ running-time bound or a reduction to one ordinary projection.

Across these comparisons, a global gradient bound used in prior algorithm
design must also be distinguished from the observed maximum gradient in
our statements.  The substantive contribution claimed here is the explicit
single-quadratic construction and its simultaneous inequalities under the
stated feedback model.

\subsection{Unknown gradient scales and parameter-free learning}

Scale invariance and adaptation to unknown gradient magnitudes have a substantial
history. \citet{scOrabonaPal2018Scale} develop scale-free FTRL and mirror-descent
methods whose predictions are unchanged by a common positive rescaling of the
loss vectors. Their results already establish that knowledge of a gradient bound
is unnecessary for optimal-order convex regret on bounded domains. Thus the
logarithm-free convex guarantee in this paper is a baseline to preserve while
adding curvature adaptation, rather than a new convex rate.

There is a related, but distinct, literature on simultaneously adapting to
gradient scale and to an unbounded comparator norm.
\citet{scCutkosky2019Hints} remove prior knowledge of both quantities using hints
and artificial constraints. \citet{scMhammediKoolen2020} develop FreeGrad and
reductions that address missing hints and dependence on the ratio between
gradient scales. \citet{scJacobsenCutkosky2022} give a parameter-free
mirror-descent framework, including a scale-free construction that avoids
restarts. \citet{scZhangEtAl2024Discretization} sharpen the dependence on
cumulative squared gradient norms using a refined discretization and also
consider an unknown Lipschitz constant. \citet{scCutkoskyMhammedi2024} obtain
further guarantees when neither the comparator norm nor the Lipschitz constant
is known. These results concern additional comparator adaptation on unbounded
domains. Here the diameter bound is supplied, and the target is simultaneous
adaptation to observed gradient magnitudes and unobserved loss curvature.
In particular, absence of restarts alone is not a distinguishing contribution.

The quantity $J=\sum_t(B_t-B_{t-1})^2$ in our bound measures increases in a
running maximum. It should not be confused with temporal gradient variation.
For example, \citet{scZhaoEtAl2026Variation} study comparator-dependent gradient
variation for smooth losses on an unbounded domain. Our guarantee assumes no
smoothness and can have small $J$ even when gradient directions oscillate.

\subsection{Betting, surrogate losses, and constrained reductions}

The multiplicative wealth update belongs to the coin-betting approach to
parameter-free learning developed by \citet{scOrabonaPal2016Betting}. In that
framework, lower bounds on wealth yield regret guarantees through convex
duality; the standard formulations normalize the coin outcomes using a known
bound. \citet{scCutkoskyOrabona2018} provide reductions from parameter-free
learning to exp-concave optimization, from general Banach spaces to one
dimension, and from constrained to unconstrained optimization.
\citet{scHoevenEtAl2018} connect exponential weights, quadratic surrogates,
gradient methods, and betting-based adaptive algorithms. These works supply
the appropriate context for using a quadratic auxiliary problem and a wealth
potential together.

The reference process also has a connection with the general objective of
combining guarantees. \citet{scCutkosky2019Combining} show how adding the
predictions of suitable unconstrained learners produces an infimal-convolution
regret bound, and develop related optimistic reductions. Our constrained
prediction is instead formed through the cone
$\{(v,a):a\geq0,\ v\in aK\}$ and a denominator that simultaneously enforces
feasibility and clips the betting fraction. The analysis couples this
prediction to an adaptive projected-gradient reference process. The specific
features established here are the clipping-compatible coefficient $b_t$, the
affine rescaling of both auxiliary states when the observed scale grows, and
the joint quadratic--wealth invariant. They should be assessed as a particular
construction within these established frameworks.

\subsection{Heterogeneous and contaminated curvature}

Adaptation to nonuniform curvature predates universal algorithms.
\citet{scBartlettEtAl2007} give adaptive online gradient descent with guarantees
that interpolate according to accumulated curvature. Their update uses the
curvature of losses after they are observed; this differs from the single
subgradient feedback imposed here. In the bandit setting,
\citet{scLuoEtAl2022} allow heterogeneous curvature revealed after each action.
More recently, \citet{scCharikarEtAl2026} study curvature-adaptive FTPL, including
nonconvex losses, using observed loss functions and an approximate offline
optimization oracle. Their cumulative-curvature guarantees address a different
information model.

The closest antecedent for the shortfall interpretation is
\citet{scKamijimaIto2024}. Their Theorem~5.6 already uses
$k_\mu=\sum_t(1-\mu_t/\mu)_+$.
For Maler, their analysis yields
$O((G^2/\mu+GD)\log T+GD\sqrt{k_\mu\log T})$
under their bounded-gradient assumptions. Their universal algorithms use
gradient information; additional function-class information enables sharper
contamination guarantees. Thus neither curvature shortfall nor adaptation to
contaminated strong convexity is new here.
Our result gives an explicit shortfall inequality for the present scale-free
construction, expressed using realized gradients and holding alongside its
convex bound. It does not replace per-round curvature by the average curvature
or by curvature of the final cumulative loss.

These comparisons also delimit the uniform strongly convex claim. The
displayed guarantees retain additive terms involving $D\widehat G_T$ and
$\mu D^2$, and do not establish a uniform
$O(\widehat G_T^2\log(eT)/\mu)$ bound for every ratio
$\mu D/\widehat G_T$. Moreover, linear memory does not imply linear running
time: solving the cone-constrained quadratic can be more costly than the
projection used by the reference process.

\section{Problem formulation}\label{sec:problem}

Let $\mathcal X\subseteq\mathbb R^d$ be a nonempty closed convex set with known finite diameter bound $D>0$. On round $t$, the learner plays $\widehat x_t\in\mathcal X$ and observes one subgradient $\widehat g_t\in\partial\ell_t(\widehat x_t)$ of a convex loss. It receives no curvature parameter, gradient bound, or horizon, and needs neither loss values nor gradients at other points. The gradients may be arbitrary finite vectors. For each fixed comparator $\widehat u\in\mathcal X$, its regret is

\[
R_T(\widehat u)=\sum_{t=1}^T[\ell_t(\widehat x_t)-\ell_t(\widehat u)].
\]

The aim is a convex guarantee without a logarithmic factor, together with improved guarantees that exploit strong convexity, including when its strength varies over time. In the varying-curvature problem, each loss has a possibly different strong-convexity lower bound $\mu_t\ge0$, meaning

\[
\ell_t(u)\ge\ell_t(x)+\langle g,u-x\rangle+\frac{\mu_t}{2}\|u-x\|^2
\quad(x,u\in\mathcal X,\ g\in\partial\ell_t(x)).
\]

The sequence $(\mu_t)$ may vary arbitrarily, need not be monotone, and may include zeros. These lower bounds are never supplied to the learner, including after a round; feedback remains one subgradient at the played point. The regret comparator remains a single fixed $\widehat u$. The uniform strongly convex setting is the special case in which $\mu_t\ge\mu$ for every $t$, for some unknown common $\mu>0$.

These are per-round assumptions on the entire domain, not merely assumptions about curvature at the played point or about the final cumulative loss. The varying-curvature guarantee below measures the total shortfall of $\mu_t$ below a reference level chosen in hindsight, while preserving the convex guarantee.

Fix $x_{\rm ref}\in\mathcal X$ and normalize the domain:

\[
K=(\mathcal X-x_{\rm ref})/D,\qquad
f_t(x)=\ell_t(x_{\rm ref}+Dx),\qquad g_t=D\widehat g_t.
\]

Then $0\in K$, $\operatorname{diam}(K)\le1$, and $\|x\|\le1$ on $K$. Playing $x_t\in K$ means playing $\widehat x_t=x_{\rm ref}+Dx_t$. Regret is unchanged, and normalized per-round strong convexity is $\alpha_t=\mu_tD^2$; in the uniform case, the common lower bound is $\alpha=\mu D^2$. Write

\[
\mathcal R_T(u)=\sum_t[f_t(x_t)-f_t(u)],\qquad
\mathcal L_T(u)=\sum_t\langle g_t,x_t-u\rangle.
\]

Convexity gives $\mathcal R_T(u)\le\mathcal L_T(u)$; the uniform strong-convexity assumption gives

\begin{equation}
\mathcal R_T(u)\le\mathcal L_T(u)-\frac{\alpha}{2}\sum_t\|x_t-u\|^2.
\tag{1}\label{eq:1}
\end{equation}

More generally, the varying-curvature assumption gives

\begin{equation}
\mathcal R_T(u)\le\mathcal L_T(u)-\frac12\sum_t\alpha_t\|x_t-u\|^2.
\tag{1a}\label{eq:1a}
\end{equation}

All norms are Euclidean and logarithms natural. Projection onto $K$ is denoted by $\Pi_K$. The geometric optimization problems below are assumed to be solved exactly. A singleton decision set is trivial.

\section{A single quadratic leader}\label{sec:algorithm}

Use the cone and initial quadratic

\[
\mathcal C=\{(v,a):a\ge0,\ v\in aK\},\qquad
Q_0(v,a)=2(\|v\|^2+a^2).
\]

Initialize $W_0=1$, $z_1=0$, and $B_0=S_0=0$. At round $t$:

\begin{enumerate}
\item Compute the quadratic leader and prediction:

\[
(v_t,a_t)=\arg\min_{\mathcal C}Q_{t-1},\qquad
e_t=v_t-a_tz_t,\qquad
d_t=\max\{1+W_{t-1}a_t,\ 2\|e_t\|\},
\]

\begin{equation}
\beta_t=\frac{e_t}{d_t},\qquad
\omega_t=\frac1{d_t-W_{t-1}a_t},\qquad
\boxed{x_t=z_t+W_{t-1}\beta_t.}
\tag{2}\label{eq:2}
\end{equation}

These formulas also apply when $a_t=0$ or $e_t=0$.

\item Play $x_t$, observe $g_t$, and set

\[
B_t=\max\{B_{t-1},\|g_t\|\},\qquad S_t=S_{t-1}+\|g_t\|^2.
\]

If $B_t=0$, set $p_t=b_t=0$, leave $Q,W,z$ unchanged, and continue.

\item Otherwise set

\[
h_t=g_t/B_t,\qquad p_t=\|h_t\|,\qquad
q_t=-\langle h_t,\beta_t\rangle,\qquad \lambda_t=B_{t-1}/B_t,
\]

\begin{equation}
\boxed{b_t=\max\{p_t^2,\ \omega_t^2p_t^2+
4\omega_t(1-\omega_t)q_t\}.}
\tag{3}\label{eq:3}
\end{equation}

Form the single quadratic surrogate

\[
s_t(v,a)=\langle h_t,v-ax_t\rangle+b_t\|v-ax_t\|^2
\]

and update

\begin{equation}
\boxed{\begin{aligned}
Q_t&=Q_0+\lambda_t(Q_{t-1}-Q_0+s_t),\\
W_t&=1+\lambda_t\bigl(W_{t-1}(1+q_t)-1\bigr),\\
z_{t+1}&=\Pi_K\left(z_t-\frac{g_t}{\sqrt{2S_t}}\right).
\end{aligned}}
\tag{4}\label{eq:4}
\end{equation}

\end{enumerate}

The coefficient $b_t$ is fixed when forming $s_t$; it is not recomputed at the surrogate's argument. Its meaning is simple: it is the smallest coefficient at least $p_t^2$ for which

\begin{equation}
s_t(v_t,a_t)\ge
\langle h_t,\beta_t\rangle+p_t^2\|\beta_t\|^2.
\tag{5}\label{eq:5}
\end{equation}

The proof establishes this characterization and $p_t^2\le b_t\le p_t/(2-p_t)\le p_t$. Thus additional quadratic weight is used only when the prediction clipping requires it.

There is no scale cap, learning-rate grid, or restart. The first nonzero gradient initializes the scale through $\lambda_t=0$. Multiplying all gradients by a positive constant leaves the predictions unchanged.

For implementation, maintain two vectors and three scalars:

\[
Q_t(v,a)=Q_0(v,a)+\langle A_t,v\rangle-a c_t+
N_t\|v\|^2-2a\langle X_t,v\rangle+a^2E_t.
\]

Starting from zero, their updates on a nonzero prefix are

\[
\begin{aligned}
A_t&=\lambda_t(A_{t-1}+h_t),&
c_t&=\lambda_t(c_{t-1}+\langle h_t,x_t\rangle),\\
N_t&=\lambda_t(N_{t-1}+b_t),&
X_t&=\lambda_t(X_{t-1}+b_tx_t),\\
E_t&=\lambda_t(E_{t-1}+b_t\|x_t\|^2).
\end{aligned}
\]

Including the reference point $z_t$, the memory cost is $O(d)$. Each round uses one cone-constrained quadratic minimization and one projection. The former need not be as inexpensive as a projection for an arbitrary domain.

\section{Regret guarantees}\label{sec:guarantees}

Fix $T\ge1$ and let $G=B_T$. If $G=0$, regret is nonpositive. Otherwise define

\[
n=\frac1G\sum_tB_{t-1}b_t,\qquad
m=\frac1G\sum_tB_{t-1}p_t,\qquad
\Gamma=\max_tB_{t-1}b_t,\qquad
J=\sum_t(B_t-B_{t-1})^2.
\]

Then $0\le n\le m\le T$, $\Gamma\le G$, and $J\le G^2$. Put

\[
\gamma=3-4\log2,\qquad L=\log(1+n/2),\qquad
\vartheta=\frac{n}{n+2},
\]

\begin{equation}
\Phi=\min_{\kappa\in[1/4,1]}
\left\{2\kappa L+\frac{2(1-\kappa)^2}{\gamma}\vartheta\right\},
\qquad
\boxed{H=\Phi+\log\min\left\{1+m,\ 2+\frac{\sqrt{2S_T}}G\right\}.}
\tag{6}\label{eq:6}
\end{equation}

For $n>0$, the minimizing value is
$\kappa=\max\{1/4,1-\gamma L/(2\vartheta)\}$.
For $n=0$, also $m=\Gamma=H=0$. These are analysis quantities, not algorithm parameters. In particular,

\[
H\le\frac12\log(1+n/2)+\frac9{8\gamma}\vartheta+
\log\min\left\{1+m,\ 2+\frac{\sqrt{2S_T}}G\right\}
\]

\begin{equation}
\le \frac12\log(1+T/2)+\log(2+\sqrt{2T})+\frac9{8\gamma}
=\log T+O(1).
\tag{7}\label{eq:7}
\end{equation}

\begin{theorem}[Simultaneous regret bounds]\label{thm:main} Simultaneously for every $u\in K$,

\begin{equation}
\boxed{\mathcal R_T(u)\le\sqrt{2S_T}+2G
\le(\sqrt2+2)\sqrt{S_T}.}
\tag{8}\label{eq:8}
\end{equation}

Let $C(u)=2(1+\|u\|^2)\le4$. For every $a>0$,

\begin{equation}
\boxed{\mathcal L_T(u)\le
G+C(u)Ga+\frac{GH}{a}
+a\sum_tB_{t-1}b_t\|x_t-u\|^2.}
\tag{9}\label{eq:9}
\end{equation}

If every loss is $\alpha$-strongly convex, then

\begin{equation}
\boxed{\mathcal R_T(u)\le
G+C(u)Ga+\frac{GH}{a}
\quad(a>0,\ 2a\Gamma\le\alpha),}
\tag{10}\label{eq:10}
\end{equation}

and the sharper gradient-jump bound is

\begin{equation}
\boxed{\mathcal R_T(u)\le
C(u)Ga+\frac{GH}{a}
+\min\left\{G,\frac{J}{2(\alpha-2a\Gamma)}\right\}
\quad(a>0,\ 2a\Gamma<\alpha).}
\tag{11}\label{eq:11}
\end{equation}

Also $\mathcal R_T(u)\le S_T/(2\alpha)$. All applicable bounds hold simultaneously. For $H>0$, \eqref{eq:10} is minimized at
$a=\min\{\sqrt{H/C(u)},\alpha/(2\Gamma)\}$, interpreting division by $\Gamma=0$ as infinity. If $H=0$, take $a\downarrow0$ in \eqref{eq:11} to obtain $\mathcal R_T(u)\le J/(2\alpha)$; do not substitute zero into $H/a$.

\end{theorem}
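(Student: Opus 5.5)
We will prove the theorem by splitting $\mathcal L_T(u)$ into a wealth-tracking part and a reference part, through the decomposition
\[
\langle g_t,x_t-u\rangle=\langle g_t,z_t-u\rangle+W_{t-1}\langle g_t,\beta_t\rangle .
\]
The first sum is controlled by the projected adaptive-gradient reference $z_t$. Summing the term $\langle g_t,z_t-u\rangle$ with the standard AdaGrad-norm argument on $K$ (diameter at most $1$, step $1/\sqrt{2S_t}$) gives $\sum_t\langle g_t,z_t-u\rangle\le\sqrt{2S_T}$.

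For the second sum, write $W_{t-1}\langle g_t,\beta_t\rangle=-B_tW_{t-1}q_t$. The rescaled recursion in \eqref{eq:4} gives $B_tW_t=B_t+B_{t-1}(W_{t-1}(1+q_t)-1)$. Telescoping then yields the wealth identity
\[
\sum_tW_{t-1}\langle g_t,\beta_t\rangle=\sum_t(B_t-B_{t-1})(1-W_{t-1})+\ldots=G-GW_T+(\text{terms}),
\]
where the terms are arranged so that the residual is at most $G$ when $W$ stays positive. Here we use $|q_t|\le p_t\|\beta_t\|\le 1/2$, which follows from $d_t\ge2\|e_t\|$, so that $W_t>0$. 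This settles \eqref{eq:8}. The final inequality $\sqrt{2S_T}+2G\le(\sqrt2+2)\sqrt{S_T}$ is immediate from $G\le\sqrt{S_T}$. The bound $\mathcal R_T\le S_T/(2\alpha)$ comes from \eqref{eq:1} by completing the square termwise: $\langle g_t,x_t-u\rangle-\tfrac\alpha2\|x_t-u\|^2\le\|g_t\|^2/(2\alpha)$.

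The core is \eqref{eq:9}. Define the potential
\[
P_t=\min_{\mathcal C}Q_t+\Psi(W_t)+(\text{curvature statistics}),
\]
scaled by $B_t$. We first verify \eqref{eq:5}, i.e.\ the claim that $b_t$ as in \eqref{eq:3} is the least coefficient, at least $p_t^2$, making $s_t(v_t,a_t)$ dominate $\langle h_t,\beta_t\rangle+p_t^2\|\beta_t\|^2$. To do this, substitute $v_t-a_tx_t=e_t(1-W_{t-1}a_t/d_t)=e_t/(d_t\omega_t)$ and solve the resulting scalar quadratic inequality in $\omega_t$.

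With \eqref{eq:5} in hand, a follow-the-leader stability step shows that $\min Q_t$ decreases by at least $\langle h_t,\beta_t\rangle+p_t^2\|\beta_t\|^2$ minus a stability term. The Hessian of $Q_t$ is $2(1+N_t)I$ in the $v$-block with a scalar Schur complement in $a$. This lets us bound the accumulated stability terms by $2\kappa L$ plus a $\vartheta$-term, with the isotropic block producing $\log(1+n/2)$ and the Schur complement producing $\vartheta$ via $\gamma=3-4\log 2$ (from $\log(1+x)\ge x-x^2/2$-type inequalities with $|q|\le1/2$). Optimizing the split over $\kappa$ gives $\Phi$.

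Separately, the log-wealth contributes $\log W_T$. Combined with the wealth identity, $W_T$ is capped by $\min\{1+m,\,2+\sqrt{2S_T}/G\}$ whenever linear regret is positive. Comparing $\min Q_T\le Q_T(au,a)$ and expanding $Q_T(au,a)$ gives
\[
C(u)a^2+a\langle\ldots\rangle+a^2\sum_t\tfrac{B_{t-1}}{G}b_t\|x_t-u\|^2 .
\]
Dividing by $a$ and multiplying by $G$ produces \eqref{eq:9}.

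Given \eqref{eq:9}, \eqref{eq:10} follows from \eqref{eq:1}: since $B_{t-1}b_t\le\Gamma$, the term $a\Gamma\|x_t-u\|^2$ is cancelled by $\tfrac\alpha2\|x_t-u\|^2$ when $2a\Gamma\le\alpha$. For \eqref{eq:11}, we redo the wealth step without discarding the additive $G$. That additive term arises as $\sum_t(B_t-B_{t-1})\cdot(\text{bounded quantity proportional to }\|x_t-u\|)$. Bounding each summand by $(B_t-B_{t-1})\|x_t-u\|\le\frac{(B_t-B_{t-1})^2}{2(\alpha-2a\Gamma)}+\frac{\alpha-2a\Gamma}{2}\|x_t-u\|^2$ absorbs it into the leftover curvature, which gives the $J$-term, while keeping $G$ as the alternative gives the minimum. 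The optimizing choices of $a$ are then elementary.

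The main obstacle is the potential step: showing a single invariant in which the clipping-adjusted $b_t$, the affine rescaling by $\lambda_t$, and the two scalar curvature statistics combine so that the per-round increment is nonpositive. I would first attempt this with $\Psi=\log$ and the block-Hessian computation, checking the rescaling rounds separately.
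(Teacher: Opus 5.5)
Your architecture is the paper's: the AdaGrad reference plus the wealth identity for \eqref{eq:8}, an invariant built from $\min_{\mathcal C}Q$, $\log W$ and two curvature statistics, the wealth cap when $\mathcal L_T(u)>0$, and the ray $(v,a)=(au,a)$. Your derivations of \eqref{eq:10} and \eqref{eq:11} from \eqref{eq:9}, and of $S_T/(2\alpha)$, are fine.

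The genuine gap is \eqref{eq:9} itself. Its content is the per-round nonincrease of the potential, which you defer, and the route you indicate would not close. Suppose you apply \eqref{eq:5} first and then subtract a stability term. That term is $\mathcal A\|h_t+2b_tr_t\|^2$, where $r_t=v_t-a_tx_t=\beta_t/\omega_t$ is the \emph{unclipped} residual, bounded only by $\|r_t\|\le2a_t$. Its piece $4\tau_tb_t\|r_t\|^2$ is then controlled by no curvature statistic. The missing idea is to keep the cross term: $\mathcal A\|h+2br\|^2=\tau p^2/b+4\tau(\langle h,r\rangle+b\|r\|^2)$. Then the intermediate update raises $\min Q$ by at least $(1-4\tau)s_t(v_t,a_t)-\tau p^2/b$, and only at that point do you invoke \eqref{eq:5}. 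This needs $1-4\tau\ge0$, which comes from $\tau\le1/8$ via $\mathsf H'\succeq4I+2bM_x^\top M_x$.

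The later steps must also be supplied.
\begin{itemize}
\item \textbf{Wealth pairing and $\gamma$.} With $y=\langle h_t,\beta_t\rangle=-q_t$ and $w=p_t\|\beta_t\|$, pair this with the increment $\log(1-y)$ of $\log W$. The linear terms cancel up to $4\tau y$. The logarithm's second-order term is dominated by the $p_t^2\|\beta_t\|^2$ part of \eqref{eq:5}, through $-\log(1-y)\le y+(4\log2-2)y^2$ on $|y|\le1/2$; the constant $1/2$ in your $x-x^2/2$ inequality fails for $y>0$. That is where $\gamma$ comes from, not from the Schur complement.
\item \textbf{Telescoping.} The inequality $(1+2w)^2\le4\kappa+8(1-\kappa)w$ and a completed square give $2\kappa Z+4(1-\kappa)^2Z^2/\gamma$. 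Then \eqref{eq:24} telescopes $2Z$ into $\Delta\log F+\Delta\log\rho$ and $Z^2$ into $\Delta(-1/F)+\Delta(-1/\rho)$. So both blocks feed both $L$ and $\vartheta$, which is the source of the factor $2$ in $\Phi$ once $\rho_T\le F_T=4+2n$. Note the $v$-block is $(4+2N)I$, not $2(1+N)I$.
\item \textbf{Rescaling rounds.} These need their own argument. We have $Q^+=(1-\lambda)Q_0+\lambda Q'$ with $\min Q_0=0$, $W^+=1-\lambda+\lambda W'$, $F^+=4+\lambda(F'-4)$ and $\rho^+\ge4+\lambda(\rho'-4)$. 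Since $\psi_\kappa$ is concave, increasing and zero at $4$, each piece of the potential is at least $\lambda$ times its intermediate value.
\end{itemize}

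Two smaller corrections. First, the residual in your wealth identity is $\sum_t(B_t-B_{t-1})\langle h_t,x_t-z_t\rangle$. It is at most $G$ because $x_t,z_t\in K$ (using $d_t\ge1+W_{t-1}a_t$), not because $W>0$. Second, the additive $G$ you trade for the $J$-term in \eqref{eq:11} does not come from the wealth step. It comes from replacing the $\bar g_t=(B_{t-1}/B_t)g_t$ that $Q_T$ actually contains by $g_t$.
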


\subsection{Original coordinates and interpretation}
In original coordinates, write

\[
\widehat S_T=\sum_t\|\widehat g_t\|^2,\quad
\widehat G_T=\max_t\|\widehat g_t\|,\quad
\widehat\Gamma=\Gamma/D,\quad \widehat J=J/D^2.
\]

The dimensionless quantities $n,m,H$ are unchanged. The uniform bounds become

\begin{equation}
R_T(\widehat u)\le\sqrt2\,D\sqrt{\widehat S_T}+2D\widehat G_T,
\tag{12}\label{eq:12}
\end{equation}

\begin{equation}
R_T(\widehat u)\le D\widehat G_T(1+4a+H/a)
\quad(a>0,\ 2a\widehat\Gamma\le\mu D),
\tag{13}\label{eq:13}
\end{equation}

\begin{equation}
R_T(\widehat u)\le4D\widehat G_Ta+\frac{D\widehat G_TH}{a}
+\min\left\{D\widehat G_T,
\frac{\widehat J}{2(\mu-2a\widehat\Gamma/D)}\right\}
\quad(a>0,\ 2a\widehat\Gamma<\mu D).
\tag{14}\label{eq:14}
\end{equation}

A simpler consequence, using $\widehat\Gamma\le\widehat G_T$, is

\begin{equation}
\boxed{R_T(\widehat u)\le
\begin{cases}
D\widehat G_T+2\mu D^2+\dfrac{2\widehat G_T^2}{\mu}H,
&\mu D\le\widehat G_T\sqrt H,\\[1mm]
D\widehat G_T(1+4\sqrt H),
&\mu D\ge\widehat G_T\sqrt H.
\end{cases}}
\tag{15}\label{eq:15}
\end{equation}

The convex guarantee has no logarithmic factor. The strongly convex guarantee uses $H\le\log T+O(1)$, but still does not establish a uniform $O(\widehat G_T^2\log(eT)/\mu)$ bound for arbitrarily large $\mu D/\widehat G_T$.

\section{Proof of the main theorem}\label{sec:proof}

\subsection{Well-definedness and the clipping surrogate}
The cone is closed and convex because $K$ is closed, convex, and bounded. Each $Q_t-Q_0$ is linear plus a nonnegative sum of squared norms. Thus $Q_t$ is coercive and $4$-strongly convex, and its cone minimizer exists and is unique.

Suppress the round index and write $W$ for the wealth before the update. The denominator in \eqref{eq:2} satisfies $d\ge1+Wa$. If $a>0$, the prediction is the convex combination
$x=(1-Wa/d)z+(Wa/d)(v/a)$; if $a=0$, cone feasibility gives $v=0$ and $x=z$. Hence $x\in K$. More importantly, putting $r=v-ax$ gives

\begin{equation}
r=(d-Wa)\beta,\qquad
\beta=\omega r,\qquad 0<\omega\le1,\qquad \|\beta\|\le\tfrac12.
\tag{16}\label{eq:16}
\end{equation}

When $\omega<1$, the second term defines $d$, so $\|\beta\|=1/2$. Thus $\beta$ is exactly $r$ clipped to radius $1/2$, but the algorithm never needs to compute $r$.

Since $|q|\le p/2$,

\begin{equation}
p^2\le b\le
\max\{p^2,\ \omega^2p^2+2p\omega(1-\omega)\}
\le\frac{p}{2-p}\le p.
\tag{17}\label{eq:17}
\end{equation}

The middle bound follows by maximizing the quadratic in $\omega\in[0,1]$. If $\omega=1$, then $b=p^2$ and \eqref{eq:5} is an equality. Otherwise $\|\beta\|^2=1/4$ and

\[
4\omega^2\left[
\langle h,r\rangle+b\|r\|^2-
\langle h,\beta\rangle-p^2\|\beta\|^2
\right]
=b-\omega^2p^2-4\omega(1-\omega)q\ge0.
\]

This proves \eqref{eq:5}, including the minimality of its coefficient. The intermediate wealth is $W^\circ=W(1+q)\ge W/2>0$; affine rescaling preserves positivity. Projection preserves $z\in K$. These observations justify the algorithm inductively, starting from $W_0=1$.

\subsection{The scale correction and the convex guarantee}
If $G=0$, every $g_t$ is zero and convexity gives $\mathcal R_T(u)\le0$; hence assume $G>0$ for the remainder of the proof. Define $\bar g_t=(B_{t-1}/B_t)g_t$ when $B_t>0$, and zero otherwise. Then

\begin{equation}
\|g_t-\bar g_t\|=B_t-B_{t-1},\qquad
\sum_t\|g_t-\bar g_t\|=G.
\tag{18}\label{eq:18}
\end{equation}

Multiplying the wealth update by $B_t$ and summing yields the exact cancellation

\begin{equation}
G(W_T-1)=\sum_t\langle\bar g_t,z_t-x_t\rangle.
\tag{19}\label{eq:19}
\end{equation}

Indeed, each increment is
$B_tW_t-B_{t-1}W_{t-1}
=B_t-B_{t-1}+\langle\bar g_t,z_t-x_t\rangle$.
Consequently $W_T\le1+m$.

The reference update, with $\eta_t=1/\sqrt{2S_t}$ on nonzero prefixes, satisfies

\[
\langle g_t,z_t-u\rangle
\le
\frac{\|z_t-u\|^2-\|z_{t+1}-u\|^2}{2\eta_t}
+\frac{\eta_t\|g_t\|^2}{2}.
\]

The first terms sum to at most $1/(2\eta_T)=\sqrt{S_T/2}$ because inverse step sizes are nondecreasing and squared distances are at most one. The second terms also sum to at most $\sqrt{S_T/2}$, using
$(S_t-S_{t-1})/\sqrt{S_t}\le2(\sqrt{S_t}-\sqrt{S_{t-1}})$.
Zero prefixes contribute nothing. Combining this reference bound with \eqref{eq:18}--\eqref{eq:19} gives

\begin{equation}
\boxed{\mathcal L_T(u)\le\sqrt{2S_T}+2G-GW_T.}
\tag{20}\label{eq:20}
\end{equation}

Dropping the final negative term proves \eqref{eq:8}. Retaining it will save half a logarithm later.

\subsection{Quadratic stability through two scalar statistics}
We now control the quadratic leader. For any current state write

\[
\mathsf H=\nabla^2Q=
\begin{pmatrix}F I&-2X\\-2X^\top&4+2E\end{pmatrix},
\qquad
F=4+2N,\quad
\rho=4+2E-\frac{4\|X\|^2}{F},\quad
\bar x=\frac{2X}{F}.
\]

The historical weights are nonnegative, so $\bar x\in K$ and $4\le\rho\le F$. The first statement expresses $\bar x$ as a convex combination including $0$; the second follows from the weighted variance identity and $\|x\|\le1$.

A prime denotes the intermediate update $Q'=Q+s$, before rescaling. Direct calculation gives

\begin{equation}
F'=F+2b,\qquad
\rho'-\rho=2b\frac F{F'}\|x-\bar x\|^2\le2b\le2,\qquad
x-\bar x'=\frac F{F'}(x-\bar x).
\tag{21}\label{eq:21}
\end{equation}

At the current cone minimizer let $k=h+2br$, and write $M=\min_{\mathcal C}Q$ and $M'=\min_{\mathcal C}Q'$. Constrained first-order optimality followed by completion of the square gives

\[
M'\ge M+s(v,a)
-\tfrac12(k,-\langle k,x\rangle)^\top(\mathsf H')^{-1}
(k,-\langle k,x\rangle)
\ge M+s(v,a)-\mathcal A\|k\|^2,
\]

where block inversion gives

\begin{equation}
\mathcal A=\frac1{2F'}+\frac{\|x-\bar x'\|^2}{2\rho'},
\qquad \tau=b\mathcal A,\qquad Z=2\tau.
\tag{22}\label{eq:22}
\end{equation}

To bound $\tau$, put $M_x=(I,-x)$. Since
$\mathsf H'\succeq4I+2bM_x^\top M_x$ and $\|M_x\|^2\le2$,

\begin{equation}
\tau=b\,\lambda_{\max}\!\left(\tfrac12M_x(\mathsf H')^{-1}M_x^\top\right)
\le\frac{b}{4+4b}\le\frac18.
\tag{23}\label{eq:23}
\end{equation}

Two elementary telescoping estimates will pay for stability:

\begin{equation}
2Z\le\Delta\log F+\Delta\log\rho,\qquad
Z^2\le\Delta(-1/F)+\Delta(-1/\rho).
\tag{24}\label{eq:24}
\end{equation}

Here $\Delta$ refers only to the intermediate update. For the first, use
$Z=b/F'+b\|x-\bar x'\|^2/\rho'$ and \eqref{eq:21} to bound $2Z$ by
$\Delta F/F'+\Delta\rho/\rho'$.
For the second, call the two summands $Z_1,Z_2$.
Since $\Delta F,\Delta\rho\le2$, \eqref{eq:21} gives
$Z_1^2\le\frac12\Delta(-1/F)$ and
$Z_2^2\le\frac12\Delta(-1/\rho)$; now use
$(Z_1+Z_2)^2\le2Z_1^2+2Z_2^2$.

\subsection{The potential and affine rescaling}
The essential simplification is to retain the cross term in
$\|h+2br\|^2$. If $b=0$, also $h=0$ and the intermediate state is unchanged. Otherwise set
$y=\langle h,\beta\rangle=-q$ and $w=p\|\beta\|\le1/2$.
The possible decrease in $M+\log W$ is at most

\begin{equation}
\begin{aligned}
\mathscr D
&=\mathcal A\|h+2br\|^2-\langle h,r\rangle-b\|r\|^2-\log(1-y)\\
&=\tau\frac{p^2}{b}
-(1-4\tau)\bigl(\langle h,r\rangle+b\|r\|^2\bigr)-\log(1-y)\\
&\le\tau-(1-4\tau)(y+w^2)-\log(1-y).
\end{aligned}
\tag{25}\label{eq:25}
\end{equation}

The last step uses $p^2\le b$, $1-4\tau\ge0$, and precisely the surrogate inequality \eqref{eq:5}. No separate clipped-round analysis remains.

For $|y|\le1/2$,
$-\log(1-y)\le y+(4\log2-2)y^2$:
the ratio $[-\log(1-y)-y]/y^2$ is at most its value at $y=1/2$.
Together with $|y|\le w$, this turns \eqref{eq:25} into

\[
\mathscr D\le\tau(1+2w)^2-\gamma w^2.
\]

For any $\kappa\in[1/4,1]$,

\[
(1+2w)^2-4\kappa-8(1-\kappa)w
=(2w-1)(2w+4\kappa-1)\le0.
\]

Completing one square therefore gives the single bound

\begin{equation}
\mathscr D
\le4\kappa\tau+\frac{16(1-\kappa)^2}{\gamma}\tau^2
=2\kappa Z+\frac{4(1-\kappa)^2}{\gamma}Z^2.
\tag{26}\label{eq:26}
\end{equation}

Define, for $s\ge4$,

\[
\psi_\kappa(s)=\kappa\log(s/4)
+\frac{4(1-\kappa)^2}{\gamma}(1/4-1/s).
\]

Equations \eqref{eq:24}--\eqref{eq:26} imply that every intermediate update preserves

\begin{equation}
\boxed{\min_{\mathcal C}Q+\log W+
\psi_\kappa(F)+\psi_\kappa(\rho)\ge0.}
\tag{27}\label{eq:27}
\end{equation}

Initially there is equality. To include affine rescaling, denote the rescaled state by a plus. Since $\min Q_0=0$ and logarithms are concave,

\[
\min Q^+\ge\lambda\min Q',\qquad
\log W^+\ge\lambda\log W'.
\]

Moreover,

\[
F^+=4+\lambda(F'-4),\qquad
\rho^+\ge4+\lambda(\rho'-4).
\]

In the latter inequality the difference is
$16\lambda(1-\lambda)\|X'\|^2/(F'F^+)\ge0$.
Each $\psi_\kappa$ is increasing, concave, and zero at $4$, so its rescaled values are at least $\lambda$ times their intermediate values. Adding proves \eqref{eq:27} after rescaling as well, including $\lambda=0$.

\subsection{Comparator substitution and curvature cancellation}
At the final time, $N_T=n$, $F_T=4+2n$, and $\rho_T\le F_T$. Optimizing \eqref{eq:27} over $\kappa$ consequently gives the unconditional state bound

\begin{equation}
\min_{\mathcal C}Q_T\ge-\Phi-\log W_T.
\tag{28}\label{eq:28}
\end{equation}

Fix a comparator $u$. If $\mathcal L_T(u)\le0$, \eqref{eq:9} holds immediately, and the remaining bounds of the preceding theorem follow since their right sides are nonnegative and $\mathcal R_T(u)\le\mathcal L_T(u)\le0$. Otherwise \eqref{eq:20} implies

\[
W_T<2+\frac{\sqrt{2S_T}}G.
\]

Together with $W_T\le1+m$, this makes \eqref{eq:28} at least $-H$. This is the second cancellation: large wealth already pays for regret, so its logarithm need not be charged at the crude scale $T$. The capped bound $\min Q_T\ge-H$ is used only in this positive-linear-regret case.

Unrolling the quadratic recursion gives

\begin{equation}
Q_T(v,a)=Q_0(v,a)+\frac1G\sum_t
\left[\langle\bar g_t,v-ax_t\rangle+
B_{t-1}b_t\|v-ax_t\|^2\right].
\tag{29}\label{eq:29}
\end{equation}

The feasible ray $(v,a)=(au,a)$ turns this into a scalar quadratic. Since
$Q_0(au,a)=C(u)a^2$ and $Q_T(au,a)\ge-H$, division by $a>0$ yields

\begin{equation}
\sum_t\langle\bar g_t,x_t-u\rangle
\le C(u)Ga+\frac{GH}{a}
+a\sum_tB_{t-1}b_t\|x_t-u\|^2.
\tag{30}\label{eq:30}
\end{equation}

Restoring $g_t$ costs at most $G$ by \eqref{eq:18}, proving \eqref{eq:9}.
Subtracting the strong-convexity term in \eqref{eq:1} cancels the last sum whenever $2a\Gamma\le\alpha$, proving \eqref{eq:10}.

For the sharper bound, retain
$c=\alpha-2a\Gamma>0$ units of curvature. The remaining terms satisfy

\[
\langle g_t-\bar g_t,x_t-u\rangle-\frac c2\|x_t-u\|^2
\le\frac{(B_t-B_{t-1})^2}{2c}.
\]

Their sum is also at most $G$ by dropping the nonpositive quadratic term. Taking the smaller estimate proves \eqref{eq:11}. Independently, \eqref{eq:1} and completion of the square on each round give
$f_t(x_t)-f_t(u)\le\|g_t\|^2/(2\alpha)$, hence the bound $S_T/(2\alpha)$.

If $n=0$, every $B_{t-1}b_t$ vanishes. Since $b_t=0$ exactly when $p_t=0$, this forces $m=\Gamma=0$, so $H=0$ and the stated $a\downarrow0$ limit applies. Finally, substituting
$G=D\widehat G_T$, $S_T=D^2\widehat S_T$,
$\Gamma=D\widehat\Gamma$, $J=D^2\widehat J$,
and $\alpha=\mu D^2$ proves \eqref{eq:12}--\eqref{eq:14}.
Minimizing $4a+H/a$ under $2aG\le\alpha$ proves \eqref{eq:15}. This completes the proof of Theorem~\ref{thm:main}.

\section{Unknown per-round strong convexity}\label{sec:varying}

\begin{theorem}[Unknown per-round strong convexity]\label{thm:varying} Suppose each $\ell_t$ satisfies the problem definition with a strong-convexity lower bound $\mu_t\ge0$, and run the algorithm \eqref{eq:2}--\eqref{eq:4} without modification. If $G=0$, then $R_T(\widehat u)\le0$ for every comparator. Otherwise, use the quantities $G,H,C(u)$ defined above, put

\[
\alpha_t=\mu_tD^2,\qquad w_t=B_{t-1}b_t,\qquad (z)_+=\max\{z,0\}.
\]

Simultaneously for every $u\in K$ and every $a>0$,

\begin{equation}
\begin{aligned}
\mathcal R_T(u)
&\le G+C(u)Ga+\frac{GH}{a}
+\sum_t\left(aw_t-\frac{\alpha_t}{2}\right)\|x_t-u\|^2\\
&\le G+C(u)Ga+\frac{GH}{a}
+\sum_t\left(aw_t-\frac{\alpha_t}{2}\right)_+.
\end{aligned}
\tag{31}\label{eq:31}
\end{equation}

In original coordinates, for every reference level $\bar\mu>0$,

\begin{equation}
\boxed{R_T(\widehat u)\le
D\widehat G_T+2\bar\mu D^2
+\frac{2\widehat G_T^2H}{\bar\mu}
+\frac{D^2}{2}\sum_{t=1}^T(\bar\mu-\mu_t)_+.}
\tag{32}\label{eq:32}
\end{equation}

Neither $a$ nor $\bar\mu$ is an algorithm parameter. Both can be optimized in hindsight, and the convex bounds \eqref{eq:8} and \eqref{eq:12} hold simultaneously. In particular, one may take the smaller of \eqref{eq:12} and the infimum of the right side of \eqref{eq:32} over $\bar\mu>0$. Here $H\le\log T+O(1)$ as in \eqref{eq:7}; the statements also cover $H=0$ with every displayed denominator strictly positive.

\end{theorem}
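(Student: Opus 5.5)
The plan is to reuse the linear-regret inequality \eqref{eq:9} from Theorem~\ref{thm:main} verbatim. That inequality was derived without any curvature assumption; it used only the algorithm, the wealth identity \eqref{eq:19}, the state bound \eqref{eq:28}, and the comparator ray substitution \eqref{eq:29}--\eqref{eq:30}. It therefore holds under per-round curvature as well. The case $G=0$ is handled exactly as before: every $g_t=0$, so convexity gives $\mathcal R_T(u)\le\mathcal L_T(u)=0$.

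For $G>0$, combine \eqref{eq:1a} with \eqref{eq:9}. This yields
\[
\mathcal R_T(u)\le G+C(u)Ga+\frac{GH}{a}+\sum_t\Bigl(aw_t-\frac{\alpha_t}{2}\Bigr)\|x_t-u\|^2,
\]
which is the first line of \eqref{eq:31}. The one subtlety concerns the case $\mathcal L_T(u)\le0$. There \eqref{eq:9} was obtained trivially, and \eqref{eq:31} still follows. The reason is that $\mathcal R_T(u)\le\mathcal L_T(u)-\frac12\sum_t\alpha_t\|x_t-u\|^2\le-\frac12\sum_t\alpha_t\|x_t-u\|^2$, while $G$, $C(u)Ga$, $GH/a$ and $a\sum_tw_t\|x_t-u\|^2$ are all nonnegative. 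The second line of \eqref{eq:31} follows by bounding each coefficient by its positive part and using $\|x_t-u\|^2\le\operatorname{diam}(K)^2\le1$, since $x_t,u\in K$.

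For \eqref{eq:32}, I use $w_t=B_{t-1}b_t\le\Gamma\le G$ and fix $\bar\mu>0$. I set $\bar\alpha=\bar\mu D^2$ and choose $a=\bar\alpha/(2G)$, so that $aw_t\le\bar\alpha/2$. Then
\[
\Bigl(aw_t-\frac{\alpha_t}{2}\Bigr)_+\le\frac12(\bar\alpha-\alpha_t)_+=\frac{D^2}{2}(\bar\mu-\mu_t)_+ .
\]
With $C(u)\le4$, the remaining terms become
\[
G+4G\cdot\frac{\bar\alpha}{2G}+\frac{2G^2H}{\bar\alpha}=G+2\bar\mu D^2+\frac{2G^2H}{\bar\mu D^2}.
\]
Substituting $G=D\widehat G_T$ turns these into $D\widehat G_T+2\bar\mu D^2+2\widehat G_T^2H/\bar\mu$, and regret is invariant under the normalization, which gives \eqref{eq:32}. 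Because $a>0$ is guaranteed by $\bar\mu>0$ and $G>0$, every denominator is strictly positive, and the case $H=0$ needs no limit argument.

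The convex bounds \eqref{eq:8} and \eqref{eq:12} hold simultaneously because they used only convexity, $\mathcal R_T\le\mathcal L_T$, and the run of the algorithm is unchanged. I expect no real obstacle. The only point requiring care is checking that the derivation of \eqref{eq:9}, including the case split on the sign of $\mathcal L_T(u)$, never invoked uniform curvature, and that the nonpositive-$\mathcal L_T$ case is handled as described above.
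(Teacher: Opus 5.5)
Your proposal is correct and follows essentially the same route as the paper. You subtract the per-round curvature term of \eqref{eq:1a} from \eqref{eq:9}, bound each coefficient by its positive part using $\operatorname{diam}(K)\le1$, and take $a=\bar\mu D^2/(2G)$ together with $w_t\le G$ and $C(u)\le4$; your separate treatment of the case $\mathcal L_T(u)\le0$ is harmless but redundant, since \eqref{eq:9} already holds unconditionally for every $u\in K$ and $a>0$.
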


The extra term in \eqref{eq:32} charges only rounds with $\mu_t<\bar\mu$. If at most $k$ rounds fall below $\bar\mu$, their total contribution is at most $k\bar\mu D^2/2$, since $\mu_t\ge0$. Thus weak or zero-curvature rounds incur a controlled penalty without replacing all per-round curvatures by their minimum. This is a guarantee in terms of curvature shortfalls; it does not replace a common curvature lower bound by the arithmetic average of the $\mu_t$.

\begin{proof} The zero-gradient case follows from convexity, so assume $G>0$. Subtract the varying-curvature term in \eqref{eq:1a} from the linear-regret bound \eqref{eq:9}. This gives the first inequality in \eqref{eq:31}. For each real $c$ and $0\le r^2\le1$, one has $cr^2\le(c)_+$. Since $\operatorname{diam}(K)\le1$, apply this with $c=aw_t-\alpha_t/2$ and $r=\|x_t-u\|$ to obtain the second inequality.

For \eqref{eq:32}, fix $\bar\mu>0$ and choose

\[
a=\frac{\bar\mu D^2}{2G}
=\frac{\bar\mu D}{2\widehat G_T}>0.
\]

By \eqref{eq:17}, $0\le b_t\le p_t\le1$, so $0\le w_t\le G$. Consequently,

\[
\left(aw_t-\frac{\alpha_t}{2}\right)_+
\le\left(aG-\frac{\mu_tD^2}{2}\right)_+
=\frac{D^2}{2}(\bar\mu-\mu_t)_+.
\]

Also, $C(u)\le4$ and $G=D\widehat G_T$ give

\[
C(u)Ga\le2\bar\mu D^2,\qquad
\frac{GH}{a}=\frac{2\widehat G_T^2H}{\bar\mu}.
\]

Substituting these estimates into \eqref{eq:31} proves \eqref{eq:32}. The simultaneous convex guarantee follows from the original theorem, which requires only convexity. No part of this argument supplies the curvature sequence to the algorithm. \end{proof}

\subsection{A finite number of weak-curvature rounds}
The following consequence makes the relation to contaminated OCO explicit.
The shortfall interpretation and its use for contamination are already present
in \citet{scKamijimaIto2024}; the constants below follow from \eqref{eq:32}.

\begin{corollary}[At most $k$ weak-curvature rounds]\label{cor:contamination}
Suppose $\mu_t\ge\mu>0$ on all but at most $k$ rounds, and $\mu_t\ge0$
on the remaining rounds. Write $\widehat G=\widehat G_T$.
If $\widehat G>0$ and $H>0$, define
\[
 r_* = \min\left\{\mu,\frac{2\widehat G}{D}
                  \sqrt{\frac{H}{k+4}}\right\}.
\]
Then, simultaneously with the convex bound,
\begin{equation}
 R_T(\widehat u)\le D\widehat G+
 \frac{D^2(k+4)}{2}r_*+\frac{2\widehat G^2H}{r_*}.
 \tag{33}\label{eq:33}
\end{equation}
In particular,
\begin{equation}
 R_T(\widehat u)\le D\widehat G+
 \frac{2\widehat G^2H}{\mu}
 +2D\widehat G\sqrt{(k+4)H}.
 \tag{34}\label{eq:34}
\end{equation}
If $\widehat G>0$ and $H=0$, the bound $R_T(\widehat u)\le D\widehat G$
holds instead. If $\widehat G=0$, regret is nonpositive.
\end{corollary}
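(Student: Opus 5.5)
The plan is to specialize \eqref{eq:32} of Theorem~\ref{thm:varying} to the reference level $\bar\mu=r_*$ and then bound the shortfall sum using the contamination hypothesis. The two degenerate cases are disposed of first. If $\widehat G=0$, then $G=0$, and Theorem~\ref{thm:varying} already gives $R_T(\widehat u)\le0$. If $\widehat G>0$ but $H=0$, I would use \eqref{eq:32} with an arbitrary $\bar\mu>0$. Since $\mu_t\ge0$, the shortfall sum is at most $T\bar\mu$, so the right side is at most $D\widehat G+\bar\mu D^2(2+T/2)$. Letting $\bar\mu\downarrow0$ gives $R_T(\widehat u)\le D\widehat G$. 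Note that $H/\bar\mu$ is never evaluated at $\bar\mu=0$, consistent with the caution in Theorem~\ref{thm:main}.

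In the main case $\widehat G>0$, $H>0$, we have $r_*>0$, so $\bar\mu=r_*$ is admissible in \eqref{eq:32}. Since $r_*\le\mu$, every round with $\mu_t\ge\mu$ has $(r_*-\mu_t)_+=0$. Each of the at most $k$ remaining rounds has $\mu_t\ge0$, so it contributes at most $r_*$. Hence $\frac{D^2}{2}\sum_t(r_*-\mu_t)_+\le\frac{D^2k}{2}r_*$. Adding this to the term $2r_*D^2=\frac{4D^2}{2}r_*$ in \eqref{eq:32} gives exactly \eqref{eq:33}. The convex bound holds simultaneously by Theorem~\ref{thm:varying}.

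For \eqref{eq:34}, I would set $\rho=\frac{2\widehat G}{D}\sqrt{H/(k+4)}$ and split into two cases according to which term attains the minimum defining $r_*$.
\begin{itemize}
\item If $r_*=\rho\le\mu$, then substitution makes both $\frac{D^2(k+4)}{2}\rho$ and $2\widehat G^2H/\rho$ equal to $D\widehat G\sqrt{(k+4)H}$. This gives $D\widehat G+2D\widehat G\sqrt{(k+4)H}$, and adding the nonnegative term $2\widehat G^2H/\mu$ yields \eqref{eq:34}.
\item If $r_*=\mu<\rho$, then $\frac{D^2(k+4)}{2}\mu\le\frac{D^2(k+4)}{2}\rho=D\widehat G\sqrt{(k+4)H}$. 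The remaining term is exactly $2\widehat G^2H/\mu$, so the bound is again at most the right side of \eqref{eq:34}, with room to spare.
\end{itemize}

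There is no real obstacle here. The only point needing care is that the choice $\bar\mu=r_*\le\mu$ makes the uncontaminated rounds contribute nothing to the shortfall. The rest is the balancing computation for $r_*$.
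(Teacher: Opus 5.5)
Your proposal is correct and follows essentially the paper's argument: set $\bar\mu=r_*\le\mu$ in \eqref{eq:32}, bound the shortfall sum by $kr_*$ to get \eqref{eq:33}, and split on which term attains the minimum in $r_*$ to get \eqref{eq:34}. The degenerate cases also match the paper; in the $H=0$ case your cruder shortfall bound $T\bar\mu$ with $\bar\mu\downarrow0$ works just as well as the paper's $r\downarrow0$.
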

\begin{proof}
For $0<r\le\mu$, at most $k$ terms of $\sum_t(r-\mu_t)_+$ are nonzero,
and each is at most $r$. Equation~\eqref{eq:32} therefore gives
\[
R_T(\widehat u)\le D\widehat G+Ar+B/r,
\qquad A=\frac{D^2(k+4)}2,\quad B=2\widehat G^2H.
\]
When $H>0$, the minimizer on $(0,\mu]$ is
$\min\{\mu,\sqrt{B/A}\}=r_*$. If $\mu\ge\sqrt{B/A}$, the optimized
sum is $2\sqrt{AB}$. Otherwise $A\mu\le\sqrt{AB}$, so it is at most
$B/\mu+2\sqrt{AB}$. This proves \eqref{eq:33}--\eqref{eq:34}.
When $H=0$, take $r\downarrow0$ without substituting zero into a denominator.
The zero-gradient case follows from convexity.
\end{proof}

\section{Discussion and limitations}\label{sec:discussion}
The algorithm combines a general convex guarantee and curvature-sensitive
bounds in a single run. Its distinctive representation is the cone quadratic:
an isotropic vector block, a vector--scalar cross term, and one scalar
quadratic coefficient suffice. The storage cost does not grow with the
number of candidate learning rates, the horizon, or the number of gradient
scale increases. Positive rescaling of every gradient leaves the predictions
unchanged, because it leaves normalized gradients and scale ratios unchanged
and cancels out of the reference update.

Several limits are essential to interpreting the guarantees. First, the
diameter bound is known, and projections and cone minimizations are exact.
The analysis does not quantify accumulated optimization error or establish
a uniform running-time advantage over expert-based methods. Second, the
uniform-curvature bounds retain additive geometric terms. Even together
with $S_T/(2\alpha)$, they do not establish the classical logarithmic rate
uniformly for arbitrarily large curvature relative to the largest observed
gradient. Third, the varying-curvature result uses per-round strong
convexity on the entire domain. Strong convexity only of the final cumulative
loss, or only near the played points, is insufficient for the stated proof.

The curvature-shortfall quantity records weak rounds relative to a chosen
reference level. It does not justify replacing all per-round curvatures by
their arithmetic average. Its value is a posteriori adaptation under the
same single-subgradient feedback model, with the convex bound available
simultaneously. Prior work already studies this form of contamination;
the relationship is spelled out in Section~\ref{sec:related}.

The present work is theoretical and contains no empirical performance claim.
Natural next questions are whether the remaining geometric terms can be
removed without introducing a learning-rate grid, which domains admit
particularly efficient cone minimization, and how to extend the potential
to approximate geometric oracles or richer curvature classes.

{\small
\bibliographystyle{plainnat}
\bibliography{universal,scale}

\begin{thebibliography}{30}
\providecommand{\natexlab}[1]{#1}
\providecommand{\url}[1]{\texttt{#1}}
\expandafter\ifx\csname urlstyle\endcsname\relax
  \providecommand{\doi}[1]{doi: #1}\else
  \providecommand{\doi}{doi: \begingroup \urlstyle{rm}\Url}\fi

\bibitem[Bartlett et~al.(2007)Bartlett, Hazan, and Rakhlin]{scBartlettEtAl2007}
Peter~L. Bartlett, Elad Hazan, and Alexander Rakhlin.
\newblock Adaptive online gradient descent.
\newblock In \emph{Advances in Neural Information Processing Systems},
  volume~20, 2007.
\newblock URL
  \url{https://papers.neurips.cc/paper_files/paper/2007/hash/261b909dfbee5a1a09d5eb50ed7a17e0-Abstract.html}.

\bibitem[Charikar et~al.(2026)Charikar, Pabbaraju, and
  Tewari]{scCharikarEtAl2026}
Moses Charikar, Chirag Pabbaraju, and Ambuj Tewari.
\newblock From non-convex to strongly convex: Curvature-adaptive {FTPL} for
  online optimization, 2026.
\newblock URL \url{https://arxiv.org/abs/2606.02948}.

\bibitem[Cutkosky(2019{\natexlab{a}})]{scCutkosky2019Combining}
Ashok Cutkosky.
\newblock Combining online learning guarantees.
\newblock In \emph{Proceedings of the Thirty-Second Conference on Learning
  Theory}, volume~99 of \emph{Proceedings of Machine Learning Research}, pages
  895--913. PMLR, 2019{\natexlab{a}}.
\newblock URL \url{https://proceedings.mlr.press/v99/cutkosky19b.html}.

\bibitem[Cutkosky(2019{\natexlab{b}})]{scCutkosky2019Hints}
Ashok Cutkosky.
\newblock Artificial constraints and hints for unbounded online learning.
\newblock In \emph{Proceedings of the Thirty-Second Conference on Learning
  Theory}, volume~99 of \emph{Proceedings of Machine Learning Research}, pages
  874--894. PMLR, 2019{\natexlab{b}}.
\newblock URL \url{https://proceedings.mlr.press/v99/cutkosky19a.html}.

\bibitem[Cutkosky and Mhammedi(2024)]{scCutkoskyMhammedi2024}
Ashok Cutkosky and Zakaria Mhammedi.
\newblock Fully unconstrained online learning.
\newblock In \emph{Advances in Neural Information Processing Systems},
  volume~37, 2024.
\newblock URL
  \url{https://proceedings.neurips.cc/paper_files/paper/2024/hash/13cd22c32c1330decd69c13cf8cadc0a-Abstract-Conference.html}.

\bibitem[Cutkosky and Orabona(2018)]{scCutkoskyOrabona2018}
Ashok Cutkosky and Francesco Orabona.
\newblock Black-box reductions for parameter-free online learning in {Banach}
  spaces.
\newblock In \emph{Proceedings of the 31st Conference on Learning Theory},
  volume~75 of \emph{Proceedings of Machine Learning Research}, pages
  1493--1529. PMLR, 2018.
\newblock URL \url{https://proceedings.mlr.press/v75/cutkosky18a.html}.

\bibitem[Duchi et~al.(2011)Duchi, Hazan, and Singer]{Duchi2011AdaGrad}
John Duchi, Elad Hazan, and Yoram Singer.
\newblock Adaptive subgradient methods for online learning and stochastic
  optimization.
\newblock \emph{Journal of Machine Learning Research}, 12\penalty0
  (61):\penalty0 2121--2159, 2011.
\newblock URL \url{https://jmlr.org/papers/v12/duchi11a.html}.

\bibitem[Gaillard et~al.(2014)Gaillard, Stoltz, and van
  Erven]{Gaillard2014Excess}
Pierre Gaillard, Gilles Stoltz, and Tim van Erven.
\newblock A second-order bound with excess losses.
\newblock In \emph{Proceedings of the 27th Conference on Learning Theory},
  volume~35 of \emph{Proceedings of Machine Learning Research}, pages 176--196.
  PMLR, 2014.
\newblock URL \url{https://proceedings.mlr.press/v35/gaillard14.html}.

\bibitem[Hazan et~al.(2007)Hazan, Agarwal, and Kale]{Hazan2007Logarithmic}
Elad Hazan, Amit Agarwal, and Satyen Kale.
\newblock Logarithmic regret algorithms for online convex optimization.
\newblock \emph{Machine Learning}, 69\penalty0 (2--3):\penalty0 169--192, 2007.
\newblock \doi{10.1007/s10994-007-5016-8}.
\newblock URL
  \url{https://link.springer.com/article/10.1007/s10994-007-5016-8}.

\bibitem[Jacobsen and Cutkosky(2022)]{scJacobsenCutkosky2022}
Andrew Jacobsen and Ashok Cutkosky.
\newblock Parameter-free mirror descent.
\newblock In \emph{Proceedings of Thirty Fifth Conference on Learning Theory},
  volume 178 of \emph{Proceedings of Machine Learning Research}, pages
  4160--4211. PMLR, 2022.
\newblock URL \url{https://proceedings.mlr.press/v178/jacobsen22a.html}.

\bibitem[Kamijima and Ito(2024)]{scKamijimaIto2024}
Tomoya Kamijima and Shinji Ito.
\newblock Contaminated online convex optimization.
\newblock \emph{Transactions on Machine Learning Research}, 2024.
\newblock ISSN 2835-8856.
\newblock URL \url{https://openreview.net/forum?id=QdGtwjDgub}.

\bibitem[Luo et~al.(2022)Luo, Zhang, and Zhao]{scLuoEtAl2022}
Haipeng Luo, Mengxiao Zhang, and Peng Zhao.
\newblock Adaptive bandit convex optimization with heterogeneous curvature.
\newblock In \emph{Proceedings of Thirty Fifth Conference on Learning Theory},
  volume 178 of \emph{Proceedings of Machine Learning Research}, pages
  1576--1612. PMLR, 2022.
\newblock URL \url{https://proceedings.mlr.press/v178/luo22a.html}.

\bibitem[Mhammedi and Koolen(2020)]{scMhammediKoolen2020}
Zakaria Mhammedi and Wouter~M. Koolen.
\newblock {Lipschitz} and comparator-norm adaptivity in online learning.
\newblock In \emph{Proceedings of Thirty Third Conference on Learning Theory},
  volume 125 of \emph{Proceedings of Machine Learning Research}, pages
  2858--2887. PMLR, 2020.
\newblock URL \url{https://proceedings.mlr.press/v125/mhammedi20a.html}.

\bibitem[Orabona and P{\'a}l(2016)]{scOrabonaPal2016Betting}
Francesco Orabona and D{\'a}vid P{\'a}l.
\newblock Coin betting and parameter-free online learning.
\newblock In \emph{Advances in Neural Information Processing Systems},
  volume~29, 2016.
\newblock URL
  \url{https://proceedings.neurips.cc/paper/2016/hash/320722549d1751cf3f247855f937b982-Abstract.html}.

\bibitem[Orabona and P{\'a}l(2018)]{scOrabonaPal2018Scale}
Francesco Orabona and D{\'a}vid P{\'a}l.
\newblock Scale-free online learning.
\newblock \emph{Theoretical Computer Science}, 716:\penalty0 50--69, 2018.
\newblock \doi{10.1016/j.tcs.2017.11.021}.
\newblock URL \url{https://arxiv.org/abs/1601.01974}.

\bibitem[van~der Hoeven et~al.(2018)van~der Hoeven, van Erven, and
  Kot{\l}owski]{scHoevenEtAl2018}
Dirk van~der Hoeven, Tim van Erven, and Wojciech Kot{\l}owski.
\newblock The many faces of exponential weights in online learning.
\newblock In \emph{Proceedings of the 31st Conference on Learning Theory},
  volume~75 of \emph{Proceedings of Machine Learning Research}, pages
  2067--2092. PMLR, 2018.
\newblock URL \url{https://proceedings.mlr.press/v75/hoeven18a.html}.

\bibitem[van Erven and Koolen(2016)]{vanErven2016MetaGrad}
Tim van Erven and Wouter~M. Koolen.
\newblock {MetaGrad}: Multiple learning rates in online learning.
\newblock In \emph{Advances in Neural Information Processing Systems},
  volume~29, 2016.
\newblock URL
  \url{https://papers.nips.cc/paper_files/paper/2016/hash/14cfdb59b5bda1fc245aadae15b1984a-Abstract.html}.

\bibitem[van Erven et~al.(2021)van Erven, Koolen, and van~der
  Hoeven]{vanErven2021MetaGrad}
Tim van Erven, Wouter~M. Koolen, and Dirk van~der Hoeven.
\newblock {MetaGrad}: Adaptation using multiple learning rates in online
  learning.
\newblock \emph{Journal of Machine Learning Research}, 22\penalty0
  (161):\penalty0 1--61, 2021.
\newblock URL \url{https://jmlr.org/papers/v22/20-1444.html}.

\bibitem[Wang et~al.(2020{\natexlab{a}})Wang, Lu, Hu, and Zhang]{Wang2020UFO}
Guanghui Wang, Shiyin Lu, Yao Hu, and Lijun Zhang.
\newblock Adapting to smoothness: A more universal algorithm for online convex
  optimization.
\newblock \emph{Proceedings of the AAAI Conference on Artificial Intelligence},
  34\penalty0 (04):\penalty0 6162--6169, 2020{\natexlab{a}}.
\newblock \doi{10.1609/aaai.v34i04.6081}.
\newblock URL \url{https://ojs.aaai.org/index.php/AAAI/article/view/6081}.

\bibitem[Wang et~al.(2020{\natexlab{b}})Wang, Lu, and Zhang]{Wang2020Maler}
Guanghui Wang, Shiyin Lu, and Lijun Zhang.
\newblock Adaptivity and optimality: A universal algorithm for online convex
  optimization.
\newblock In \emph{Proceedings of the 35th Uncertainty in Artificial
  Intelligence Conference}, volume 115 of \emph{Proceedings of Machine Learning
  Research}, pages 659--668. PMLR, 2020{\natexlab{b}}.
\newblock URL \url{https://proceedings.mlr.press/v115/wang20e.html}.
\newblock Conference held in 2019.

\bibitem[Xie et~al.(2024)Xie, Zhao, and Zhou]{Xie2024GeneralizedSmoothness}
Yan-Feng Xie, Peng Zhao, and Zhi-Hua Zhou.
\newblock Gradient-variation online learning under generalized smoothness.
\newblock In \emph{Advances in Neural Information Processing Systems},
  volume~37, 2024.
\newblock URL
  \url{https://papers.nips.cc/paper_files/paper/2024/hash/42c40aff7814e9796266e12053b1c610-Abstract-Conference.html}.

\bibitem[Yan et~al.(2023)Yan, Zhao, and Zhou]{Yan2023GradientVariation}
Yu-Hu Yan, Peng Zhao, and Zhi-Hua Zhou.
\newblock Universal online learning with gradient variations: A multi-layer
  online ensemble approach.
\newblock In \emph{Advances in Neural Information Processing Systems},
  volume~36, 2023.
\newblock URL
  \url{https://papers.nips.cc/paper_files/paper/2023/hash/76818d8d85e05e45ce3a16a8468619d1-Abstract-Conference.html}.

\bibitem[Yan et~al.(2024)Yan, Zhao, and Zhou]{Yan2024OptimalVariation}
Yu-Hu Yan, Peng Zhao, and Zhi-Hua Zhou.
\newblock A simple and optimal approach for universal online learning with
  gradient variations.
\newblock In \emph{Advances in Neural Information Processing Systems},
  volume~37, 2024.
\newblock URL
  \url{https://papers.nips.cc/paper_files/paper/2024/hash/150f35763dc51bfc269690d36a5a7c88-Abstract-Conference.html}.

\bibitem[Yang et~al.(2024)Yang, Wang, Zhao, and Zhang]{Yang2024OneProjection}
Wenhao Yang, Yibo Wang, Peng Zhao, and Lijun Zhang.
\newblock Universal online convex optimization with {1} projection per round.
\newblock In \emph{Advances in Neural Information Processing Systems},
  volume~37, 2024.
\newblock URL
  \url{https://proceedings.neurips.cc/paper_files/paper/2024/hash/37e90dcf2909b5068858b34b5239f187-Abstract-Conference.html}.

\bibitem[Zhang et~al.(2022)Zhang, Wang, Yi, and Yang]{Zhang2022USC}
Lijun Zhang, Guanghui Wang, Jinfeng Yi, and Tianbao Yang.
\newblock A simple yet universal strategy for online convex optimization.
\newblock In \emph{Proceedings of the 39th International Conference on Machine
  Learning}, volume 162 of \emph{Proceedings of Machine Learning Research},
  pages 26605--26623. PMLR, 2022.
\newblock URL \url{https://proceedings.mlr.press/v162/zhang22af.html}.

\bibitem[Zhang et~al.(2025)Zhang, Wang, Wang, Yi, and
  Yang]{Zhang2025SecondOrder}
Lijun Zhang, Yibo Wang, Guanghui Wang, Jinfeng Yi, and Tianbao Yang.
\newblock Universal online convex optimization meets second-order bounds.
\newblock \emph{Journal of Machine Learning Research}, 26\penalty0
  (152):\penalty0 1--53, 2025.
\newblock URL \url{https://jmlr.org/papers/v26/24-1131.html}.

\bibitem[Zhang et~al.(2024)Zhang, Yang, Cutkosky, and
  Paschalidis]{scZhangEtAl2024Discretization}
Zhiyu Zhang, Heng Yang, Ashok Cutkosky, and Ioannis~C. Paschalidis.
\newblock Improving adaptive online learning using refined discretization.
\newblock In \emph{Proceedings of the 35th International Conference on
  Algorithmic Learning Theory}, volume 237 of \emph{Proceedings of Machine
  Learning Research}, pages 1208--1233. PMLR, 2024.
\newblock URL \url{https://proceedings.mlr.press/v237/zhang24a.html}.

\bibitem[Zhao et~al.(2025)Zhao, Yan, Yu, and Zhou]{Zhao2025UniGrad}
Peng Zhao, Yu-Hu Yan, Hang Yu, and Zhi-Hua Zhou.
\newblock Adaptivity and universality: Problem-dependent universal regret for
  online convex optimization, 2025.
\newblock URL \url{https://arxiv.org/abs/2511.19937}.
\newblock Preprint, version 1.

\bibitem[Zhao et~al.(2026)Zhao, Jacobsen, Cesa-Bianchi, and
  Zhao]{scZhaoEtAl2026Variation}
Yuheng Zhao, Andrew Jacobsen, Nicol{\`o} Cesa-Bianchi, and Peng Zhao.
\newblock Gradient-variation regret bounds for unconstrained online learning.
\newblock In \emph{Proceedings of Thirty Ninth Conference on Learning Theory},
  volume 336 of \emph{Proceedings of Machine Learning Research}, pages
  7062--7104. PMLR, 2026.
\newblock URL \url{https://proceedings.mlr.press/v336/zhao26a.html}.

\bibitem[Zinkevich(2003)]{Zinkevich2003OGD}
Martin Zinkevich.
\newblock Online convex programming and generalized infinitesimal gradient
  ascent.
\newblock In \emph{Proceedings of the 20th International Conference on Machine
  Learning}, pages 928--936, 2003.
\newblock URL \url{https://www.cs.cmu.edu/~maz/publications/ICML03.pdf}.

\end{thebibliography}
}
\end{document}